\newtheorem{theorem}{Theorem}
\newtheorem{lemma}[theorem]{Lemma}
\newtheorem{corollary}[theorem]{Corollary}
\theoremstyle{definition}
\theoremstyle{remark}
\newtheorem{remark}[theorem]{Remark}
\numberwithin{equation}{section}
\numberwithin{theorem}{section}
\newcommand{\A}{\mathcal{A}}
\newcommand{\B}{\mathcal{B}}
\newcommand{\I}{\mathcal{I}}
\newcommand{\N}{\mathcal{N}}
\newcommand{\z}{\mathcal{Z}}
\begin{document}

\title[Conditional expectations]{Conditional expectations onto\\maximal abelian *-subalgebras}

\author{Charles A. Akemann}
\address{Department of Mathematics\\ University of California \\ Santa Barbara, CA 93106,
USA}
\email{akemann@math.ucsb.edu}

\author{David Sherman}
\address{Department of Mathematics\\ University of Virginia\\ P.O. Box 400137\\ Charlottesville, VA 22904, USA}
\email{dsherman@virginia.edu}

\subjclass[2000]{Primary 46L10; Secondary 46A22, 46L30.}
\keywords{von Neumann algebra, conditional expectation, maximal abelian *-subalgebra, Kadison-Singer problem}

\begin{abstract}
We determine when there is a unique conditional expectation from a semifinite von Neumann algebra onto a singly-generated maximal abelian *-subalgebra.  Our work extends the results of Kadison and Singer via new methods, notably the observation that a unique conditional expectation onto a singly-generated maximal abelian *-subalgebra must be normal.
\end{abstract}

\maketitle

\section{Introduction}

Throughout this paper $\N$ is a von Neumann algebra and $\A \subseteq \N$ is a maximal abelian *-subalgebra (MASA).  Recall that a \textit{conditional expectation} (CE) from a von Neumann algebra onto a subalgebra is a (not necessarily normal) projection of norm one.  Since abelian von Neumann algebras are injective Banach spaces, there is at least one CE from $\N$ onto $\A$.  Here we ask, ``When is there a \textit{unique} CE from $\N$ onto $\A$?"

A MASA is said to be \textit{discrete} if it is generated by minimal projections, and \textit{continuous} if it contains no minimal projections.  Kadison and Singer (\cite {KS}) showed that in $\B(\ell^2)$, a MASA has a unique CE if and only if it is discrete.  A key step in their proof is a calculation in Fourier analysis that guarantees the existence of multiple CEs onto a continuous MASA.  One of the main results here is that for singly-generated $\A$ and semifinite $\N$, the CE is unique if and only if $\A$ has the form $\sum p_t \N p_t$ for a family of abelian projections $\{p_t\} \subset \N$ adding to 1 (Theorem \ref{T:main}).  In particular $\N$ must be of type I.  Interestingly, our proof of this generalization requires no Fourier analysis at all.  Our techniques rely on the new observation that a unique CE onto a singly-generated MASA is necessarily normal (Corollary \ref{T:normal}), and are closely tied to state extensions.

The thrust of the Kadison-Singer paper is to decide whether pure states on a MASA in $\B(\ell^2)$ have unique state extensions to all of $\B(\ell^2)$.  They answered this negatively for a continuous MASA, via the observation that the existence of multiple CEs implies that there is a pure state with multiple extensions.  The converse of this observation is not known to hold, so that the uniqueness of the CE from $\B(\ell^2)$ to a discrete MASA does not entail that pure states have unique state extensions -- a question that remains open as the \textit{Kadison-Singer problem}.  But the observation remains valid for any inclusion of von Neumann algebras, and the results of this paper do answer a Kadison-Singer-type question for many inclusions $\A \subseteq \N$ by guaranteeing that some pure states on $\A$ have nonunique state extensions to $\N$ (Corollary \ref{T:newks}).

We are indebted to Sorin Popa for getting us started on these problems and for suggesting nicer proofs for some of the results.

\section{Background} \label{S:back}

Normality and singularity play an important role in this paper and can be defined in different ways, so we review the characterizations we use.  A linear functional or CE is \textit{normal} if it is weak* continuous.  For a CE $E$, normality is easily seen to be equivalent to the inclusion $E^*(\A_*) \subseteq \N_*$.  A state or CE on $\N$ is \textit{singular} if for any nonzero projection $p \in \N$ there is a nonzero projection $q \leq p$ in its kernel.  A linear functional is singular if it is a linear combination of singular states.

The module actions of $\N$ on its dual will be written as follows:
$$(\varphi x)(y) \triangleq \varphi(xy), \qquad (x\varphi)(y) \triangleq \varphi(yx), \qquad x,y \in \N, \: \varphi \in \N^*.$$
Here the normality or singularity of $\varphi$ implies the same property for $\varphi x$ and $x \varphi$.  The \textit{centralizer} of $\varphi \in \N^*_+$ is the *-subalgebra $\{x \in \N \mid\varphi x = x \varphi\}$.  

Let $W$ be the collection of finite sets of projections in $\A$ with sum 1, partially ordered by refinement; i.e., $F \geq G$ if every element of $F$ is dominated by an element of $G$.   For $F \in W, x \in \N, \varphi \in \N^*$, define the ``pavings" $x_F \in N$ and $\varphi_F\in \N^*$ by  $x_F = \sum_{p \in F}pxp$ and $g_F = \sum_{p \in F} p\varphi p$.  With $E: \N \to \A$ a CE, the following facts are easy to check:
\begin{equation} \label{E:paving}
\varphi_F(x)=\varphi(x_F), \qquad \|x_F\| \leq\| x\|, \qquad \|\varphi_F\| \leq \|\varphi\|, \qquad E^*(\varphi|_\A)_F = E^*(\varphi|_\A).
\end{equation}


If $V$ is an upward-filtering subset of $W$ such that $(\cup_{F \in V} F)'\cap \N =\A$, we call $V$ a \textbf{full subset} for $\A$.  In the sequel it will be useful to work with sequential full subsets; these clearly exist for any singly-generated $\A$, since we may take an increasing family in a countable set of projections that generates $\A$.  But full subsets need not be generating, and large MASAs may also have sequential full subsets.  For example, let $G$ be a group whose generators $\{g_t\}_{t \in [0,1]}$ satisfy only the relations $g_s g_t = g_t g_s$ for $s,t > 0$.  Since $G$ is ICC, $L(G)$ is a $\text{II}_1$ factor.  Moreover $\A = W^*(\{g_t\}_{t>0})$ is an uncountably-generated MASA, and $\A = \{g_1\}' \cap L(G)$.  (This can be read, for instance, out of \cite[Proposition 4.1]{Popa}.)  Thus any upward-filtering $V$ whose union generates $W^*(\{g_1\})$ will be full.

This gives a way to produce a CE $E: \N \to \A$.  Let $V$ be a full subset for $\A$, and consider the net $\{(x_F)_{x \in \N} \}_{F \in V}$.  For each index $F$ the output lies in $\Pi_{x \in \N} \N_{\|x\|}$ (here $\N_{\|x\|}$ denotes the closed ball in $\N$ of radius $\|x\|$), which is compact when topologized as the product of weak* compact sets.  Let $U$ be the index set for a convergent subnet, and finally set $E(x) = w^*\lim_{F \in U} x_{F}$.  The basic idea of this construction originates with von Neumann (\cite[Chapter II]{vN}).  It was explicitly studied by Kadison-Singer for sequential full subsets of MASAs in $\B(\ell^2)$, and their justification that $E$ is a CE holds in the general situation (\cite[Lemma 1]{KS}).  We follow their nomenclature by calling a CE onto a MASA \textit{proper} if it is of this form, and otherwise \textit{improper}.  (This terminology has been applied slightly differently by some later authors.)  The CEs constructed by Kadison-Singer from $\B(\ell^2)$ onto a continuous MASA are all proper -- are there any improper CEs for this inclusion?  We do not know.  In fact we believe that this paper is the first to establish that improper CEs onto (other) MASAs exist (Corollary \ref{T:improper}).

From the extensive literature concerning CEs onto MASAs, here are the theorems that we need.

\begin{theorem} \label{T:facts} ${}$
\begin{enumerate}
\item A CE is a positive bimodule map: $E(a_1 x a_2) = a_1 E(x) a_2$ for $x \in \N$, $a_j \in \A$ $($\cite[Theorem 1]{To1957}$)$.
\item A normal CE onto a MASA is the unique proper CE $($\cite[Corollary 6.1.8]{Ar}$)$.  Thus there can be at most one normal CE onto a MASA.
\item A normal CE onto a MASA is automatically faithful $($\cite[Proposition 1.2]{To}$)$.  The existence of a faithful CE implies the existence of a normal CE $($\cite[Proposition 2.2]{To}$)$, and for a MASA in a semifinite algebra this happens if and only if the MASA is generated by finite projections $($\cite[Proposition 4.4]{To}$)$.
\item There need not be any normal CEs -- for instance, when $\A$ is the continuous MASA in $\B(\ell^2)$ $($\cite[Remark 5]{KS}$)$.
\item Let $\psi$ be a normal faithful state on $\A$.  There is a 1-1 correspondence between CEs from $\N$ to $\A$ and state extensions of $\psi$ with $\A$ in their centralizers, given by $E \leftrightarrow E^*(\psi)$.  Moreover $E$ is normal or singular if and only if $E^*(\psi)$ is.
\end{enumerate}
\end{theorem}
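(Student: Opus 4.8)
The plan is to show that $E \mapsto E^*(\psi) = \psi \circ E$ is a bijection onto the indicated set of states, and then to match the two regularity properties. First I would check that the forward map lands where claimed. For a CE $E$, the composite $\psi \circ E$ is positive and unital, hence a state, and it restricts to $\psi$ on $\A$ since $E|_\A = \mathrm{id}$. Using the bimodule identities $E(ay) = aE(y)$, $E(ya) = E(y)a$ from Theorem \ref{T:facts}(1) together with commutativity of $\A$, one gets $\psi(E(ay)) = \psi(aE(y)) = \psi(E(y)a) = \psi(E(ya))$ for $a \in \A$, $y \in \N$, so $\A$ lies in the centralizer of $\psi \circ E$. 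Injectivity is immediate from faithfulness: if $\psi \circ E_1 = \psi \circ E_2$, then for all $x \in \N$, $a \in \A$ the bimodule property gives $\psi\bigl((E_1(x)-E_2(x))a\bigr) = (\psi\circ E_1 - \psi\circ E_2)(xa) = 0$, and faithfulness of $\psi$ on $\A$ forces $E_1(x)=E_2(x)$.

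The substantive step is surjectivity: given a state $\varphi$ on $\N$ with $\varphi|_\A = \psi$ and $\A$ in its centralizer, I would build a CE by declaring $E(x)$ to be the unique element of $\A$ satisfying $\psi(E(x)a) = \varphi(xa)$ for all $a \in \A$. Everything turns on this $E(x)$ existing in $\A$ (and not merely in some larger $L^p$ space). Realizing $\A = L^\infty(\mu)$ with $\psi = \int\cdot\,d\mu$ for a probability measure $\mu$, the functional $a \mapsto \varphi(xa)$ is represented by an $L^\infty$, hence an $\A$, element of norm $\leq \|x\|$ precisely when it is bounded for the $L^1(\mu)$-norm $\|a\|_1 = \psi(|a|)$. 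I expect this boundedness to be the main obstacle, and I would clear it with the centralizer hypothesis: writing the polar decomposition $a = v|a|$ in $\A$ and inserting $|a|^{1/2}$, the centralizer identity symmetrizes $\varphi(xa) = \varphi(xv|a|^{1/2}|a|^{1/2}) = \varphi(|a|^{1/2}xv|a|^{1/2})$, whence the generalized Cauchy--Schwarz inequality yields $|\varphi(xa)| \leq \|xv\|\,\varphi(|a|) \leq \|x\|\,\psi(|a|)$. This is exactly the required $L^1$-bound.

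With $E$ so defined it remains to confirm it is a CE with $\psi\circ E = \varphi$. Linearity and $\|E(x)\|\leq\|x\|$ come from the construction; $E|_\A = \mathrm{id}$ and $\psi\circ E = \varphi$ follow by testing against $a \in \A$ and against $a = 1$ respectively, using faithfulness. For positivity I would take $x \geq 0$ and a projection $q \in \A$ and use the centralizer to rewrite $\varphi(xq) = \varphi(qxq) \geq 0$, so $\psi(E(x)q) \geq 0$ for every such $q$; since $x \geq 0$ makes $E(x)$ self-adjoint (a short adjoint-and-centralizer computation) and $\psi$ is faithful, $E(x)\geq 0$. A positive unital idempotent onto $\A$ has norm one, so $E$ is a CE, and the injectivity above shows the construction inverts the forward map.

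Finally I would match normality and singularity. If $E$ is normal then $\psi\circ E$ is normal as a composition of weak* continuous maps; conversely, if $\varphi = \psi\circ E$ is normal, then for each $a \in \A$ the functional $x \mapsto \psi(E(x)a) = \varphi(xa)$ is normal (Section \ref{S:back}), and since $\{a\psi : a \in \A\}$ is norm-dense in $\A_*$ while $\N_*$ is norm-closed in $\N^*$, every $\omega\circ E$ with $\omega\in\A_*$ is normal; thus $E^*(\A_*)\subseteq\N_*$ and $E$ is normal. For singularity I would appeal to the definition directly: for a projection $q$, positivity of $E$ and faithfulness of $\psi$ give $E(q)=0 \iff \psi(E(q))=0 \iff \varphi(q)=0$, so a nonzero projection dominates a nonzero subprojection in $\ker E$ exactly when it dominates one in $\ker\varphi$. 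Hence $E$ is singular iff $\varphi$ is, completing the equivalence.
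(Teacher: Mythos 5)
Your argument is correct, and on the one substantive point it takes a genuinely different route from the paper. The forward direction (that $E^*(\psi)$ extends $\psi$ and has $\A$ in its centralizer) and the normality equivalence are the same as in the paper, and your singularity argument just spells out the paper's one-line observation that $E$ and $E^*(\psi)$ annihilate the same projections. The difference is surjectivity: the paper simply cites de Korvin \cite[Theorem 1]{dK} for the fact that every state extension of $\psi$ with $\A$ in its centralizer has the form $E^*(\psi)$, whereas you reprove this from scratch by a Radon--Nikodym construction, realizing $(\A,\psi)$ as $(L^\infty(\mu),\int\cdot\,d\mu)$ and defining $E(x)\in\A$ by $\psi(E(x)a)=\varphi(xa)$. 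Your key estimate is sound: writing $a=v|a|$ with $v\in\A$ and using the centralizer condition to symmetrize, $\varphi(xa)=\varphi(|a|^{1/2}xv|a|^{1/2})$, so Cauchy--Schwarz for the state $\varphi$ gives $|\varphi(xa)|\leq\|x\|\,\varphi(|a|)=\|x\|\,\psi(|a|)$, which is exactly the $L^1(\mu)$-bound needed for $E(x)$ to live in $L^\infty(\mu)=\A$ with $\|E(x)\|\leq\|x\|$; the subsequent verifications (positivity via $\varphi(xq)=\varphi(qxq)\geq 0$ on projections $q\in\A$, self-adjointness, idempotence) all go through. What your route buys is self-containedness and a transparent explanation of \emph{why} the centralizer hypothesis is exactly what makes the construction work -- it is precisely the $L^1$-boundedness; what the citation buys the paper is brevity, and de Korvin's theorem is not restricted to the abelian case. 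Two minor divergences: your injectivity argument (test against $a=(E_1(x)-E_2(x))^*$ and use faithfulness) is actually more direct than the paper's, which goes through norm-density of $\psi\A$ in $\A_*$; and you should note, at least in passing, that the identification of a $\sigma$-finite abelian von Neumann algebra with faithful normal state as $L^\infty$ of a probability space is the standard structure theorem (e.g., \cite[Chapter III]{T}), since that is the only nontrivial input your construction quietly uses.
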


\begin{proof}
We discuss only (5), for which we have no complete reference.

First, any state of the form $E^*(\psi)$ has $\A$ in its centralizer:
$$(a E^*(\psi))(x) = \psi(E(xa)) = \psi(E(x)a) = \psi(a E(x)) = \psi(E(ax)) = (E^*(\psi)a)(x), \qquad a \in \A, \: x \in \N.$$
It is shown in \cite[Theorem 1]{dK} that any state extension of $\psi$ having $\A$ in its centralizer can be written as $E^*(\psi)$ for some $E$.  As for uniqueness, suppose $E,E': \N \to \A$ are CEs satisfying $E^*(\psi) = E'^*(\psi)$, and choose an arbitrary $x \in \N$.  We have
\begin{equation} \label{E:unique}
(\psi a)(E(x) - E'(x)) = \psi (E(ax) - E'(ax)) = E^*(\psi)(ax) - E'^*(\psi)(ax) = 0, \qquad a \in \A.
\end{equation}
Since $\psi$ is faithful, the space $\psi \A$ is norm dense in $\A_*$ (by \cite[Theorem III.2.7(iii)]{T}, for instance).  So \eqref{E:unique} implies that all normal functionals vanish on $E(x)-E'(x)$, and therefore $E(x)=E'(x)$.

If $E$ is normal, $E^*(\psi) = \psi \circ E$ is normal as a composition of normal maps.  On the other hand, if $E^*(\psi)$ is normal, for any $a \in \A$ we have $E^*(\psi a) = E^*(\psi) a \in \N_*$.  Again by density of $\psi \A$ in $\A_*$, we conclude $E^*(\A_*) \subseteq \N_*$, and $E$ is normal.  The statement about singularity follows from the observation that $E$ and $E^*(\psi)$ annihilate the same projections.
\end{proof}

Items (2) and (5) of Theorem \ref{T:facts} entail the well-known fact that for a MASA in a finite von Neumann algebra, the unique CE that preserves normal tracial states is the unique normal CE.

\section{Uniqueness implies normality}

The main result of this section is the implication (1) $\Rightarrow$ (5) in the following theorem.

\begin{theorem}\label{T:unique}
For a CE $E: \N \to \A$, the following conditions are equivalent:
\begin{enumerate}
\item $E$ is the unique proper CE;
\item for every full subset $V$ for $\A$, we have $\forall x \in \N$, $w^*\lim_{F \in V} x_F = E(x)$;
\item for every full subset $V$ for $\A$, we have $\forall \varphi \in \N_* \subseteq \N^*$, $w^*\lim_{F \in V} \varphi_F = E^*(\varphi|_\A)$.
\end{enumerate}
If $\A$ has a sequential full subset $\{F_n\}$, then the following conditions are also equivalent:
\begin{enumerate}
\item[(4)] $\forall \varphi \in \N_*$, $\varphi_{F_n} \to E^*(\varphi|_A)$ in norm;
\item[(5)] $E$ is normal.
\end{enumerate}
\end{theorem}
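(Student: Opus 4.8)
The plan is to establish the general equivalences (1)$\Leftrightarrow$(2)$\Leftrightarrow$(3) first, and then, under the sequential hypothesis, to close the cycle (3)$\Rightarrow$(4)$\Rightarrow$(5)$\Rightarrow$(1). Since (1)$\Rightarrow$(3) will already be in hand, this chain realizes the paper's main implication (1)$\Rightarrow$(5) through the single nontrivial step (3)$\Rightarrow$(4), where the real work lies.

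For (2)$\Leftrightarrow$(3) I would simply transpose the identity $\varphi_F(x)=\varphi(x_F)$ from \eqref{E:paving}. For $\varphi\in\N_*$ and $x\in\N$ one has $\varphi(E(x))=E^*(\varphi|_\A)(x)$ since $E(x)\in\A$, so ``$x_F\to E(x)$ weak* for all $x$'' is verbatim the assertion ``$\varphi_F(x)\to E^*(\varphi|_\A)(x)$ for all $x$ and all normal $\varphi$,'' i.e. weak* convergence of $\varphi_F$. For (2)$\Leftrightarrow$(1) I would run the standard compactness argument inside $\prod_{x\in\N}\N_{\|x\|}$: condition (2) says the full net (not merely a subnet) converges, so every proper CE, being a subnet limit along some full subset, must coincide with $E$; conversely, if (2) failed there would be a coordinate $x_0$ and a full subset along which $x_{0,F}$ avoids a weak* neighborhood of $E(x_0)$, and extracting a convergent subnet would produce a second proper CE (a CE by the Kadison--Singer construction), contradicting (1).

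For the sequential part, the two soft implications come first. For (4)$\Rightarrow$(5): each $\varphi_{F_n}$ is normal and $\N_*$ is norm closed in $\N^*$, so the norm limit $E^*(\varphi|_\A)$ is normal for every $\varphi\in\N_*$; since restriction $\N_*\to\A_*$ is onto (the inclusion $\A\hookrightarrow\N$ is an isometry with weak*-closed range), this says $E^*(\A_*)\subseteq\N_*$, which is normality of $E$. The implication (5)$\Rightarrow$(1) is exactly Theorem \ref{T:facts}(2).

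The heart is (3)$\Rightarrow$(4), the upgrade from weak* to norm convergence along the sequence $\{F_n\}$. Here I would first reduce to positive $\varphi$ by writing $\varphi$ as a linear combination of four positive normal functionals, treating each separately since (3) applies to each. For $\varphi\ge 0$ the pavings satisfy $\varphi_{F_n}\ge 0$ and, using \eqref{E:paving}, $\|\varphi_{F_n}\|=\varphi_{F_n}(1)=\varphi(1)$, while the weak* limit $E^*(\varphi|_\A)$ also has norm $\varphi(1)$; thus we have a weakly convergent sequence of positive elements of $\N_*$ whose norms converge to the norm of the limit. I expect the main obstacle to be precisely the predual lemma that this forces norm convergence --- the noncommutative Scheffé/Grümm phenomenon, that on the positive cone of $\N_*$ weak convergence together with convergence of norms implies norm convergence. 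I would isolate and prove this as a separate lemma about $\N_*$. It is exactly the point where sequentiality is indispensable: the analogous statement for nets is false, for otherwise a normal CE would always exist, contradicting Theorem \ref{T:facts}(4). Granting the lemma, the four positive pieces converge in norm and (4) follows.
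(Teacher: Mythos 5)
Your architecture coincides with the paper's (same chain, and your treatments of (1)$\Leftrightarrow$(2)$\Leftrightarrow$(3), (4)$\Rightarrow$(5), and (5)$\Rightarrow$(1) are all correct and essentially the paper's), but the lemma on which you hang the crucial step (3)$\Rightarrow$(4) is false: on the positive cone of $\N_*$, weak convergence together with convergence of norms does \emph{not} imply norm convergence unless $\N$ is atomic. Take $\N = L^\infty[0,2\pi]$, $\N_* = L^1[0,2\pi]$, and $f_n = \frac{1}{2\pi}(1+\sin(nx))$: these are states, they converge weakly to the state $\frac{1}{2\pi}\mathbf{1}$ by the Riemann--Lebesgue lemma, and $\|f_n\|_1 = 1 = \|\frac{1}{2\pi}\mathbf{1}\|_1$ for every $n$, yet $\|f_n - \frac{1}{2\pi}\mathbf{1}\|_1 = \frac{1}{2\pi}\int_0^{2\pi}|\sin(nx)|\,dx = \frac{2}{\pi}$ for all $n$. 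The Grümm/Dell'Antonio phenomena you invoke are genuinely type I facts about trace-class (atomic) preduals, and Scheff\'{e}'s lemma requires pointwise a.e.\ convergence, which weak convergence does not supply. Since the theorem here concerns arbitrary $\N$ --- and the paper applies it through Corollary \ref{T:normal} precisely to type II algebras in Theorem \ref{T:sg} --- the gap is fatal as stated. (There is also a minor circularity: your lemma is phrased ``on the positive cone of $\N_*$,'' but that the weak* limit $E^*(\varphi|_\A)$ is normal is essentially the conclusion being proved.)

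The repair must use the special structure of the paving sequence, which your argument discards after extracting positivity and the norm identity; positivity turns out to be a red herring. The paper's route: by \cite[Corollary 3.3]{ADG} (a Grothendieck-type property of von Neumann algebras), the weak*-convergent sequence $\varphi_{F_n}$ in fact converges \emph{weakly} in $\N^*$ to $E^*(\varphi|_\A)$; by Mazur's theorem some convex combination $\sum_j c_j \varphi_{F_{n_j}}$ is norm-close to $E^*(\varphi|_\A)$; and then the facts in \eqref{E:paving} --- paving is a norm contraction, $(\varphi_{F_m})_{F_n} = \varphi_{F_n}$ whenever $F_n$ refines $F_m$, and $E^*(\varphi|_\A)$ is paving-invariant --- give, for $n \geq \max_j\{n_j\}$, the inequality $\|\varphi_{F_n} - E^*(\varphi|_\A)\| \leq \|\sum_j c_j \varphi_{F_{n_j}} - E^*(\varphi|_\A)\|$, whence norm convergence. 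So sequentiality enters exactly through the ADG theorem (which fails for nets, consistent with your closing remark about Theorem \ref{T:facts}(4)), not through a Scheff\'{e}-type statement; if you want to keep your outline, replace your lemma by the Grothendieck-plus-Mazur-plus-paving argument, since no lemma about general positive sequences in $\N_*$ can do the job.
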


\begin{proof}


(1) $\Leftrightarrow$ (2): This follows from the definition of a proper CE.  

(2) $\Rightarrow$ (3): From the equality
$$E^*(\varphi|_\A)(x) =\varphi(E(x))=\varphi(w^*\lim x_F)=\lim \varphi(x_F) = \lim \varphi_F(x), \qquad x \in \N, \: \varphi \in \N_*.$$

(3) $\Rightarrow$ (2): Similarly, from
$$\varphi(E(x)) = E^*(\varphi|_\A)(x) = (w^*\lim \varphi_F)(x) = \lim \varphi_F(x) = \lim \varphi(x_F), \qquad x \in \N, \: \varphi \in \N_*.$$

(3) $\Rightarrow$ (4): Condition (3) implies $\N_* \ni \varphi_{F_n} \to E^*(\varphi|_\A)$ in the weak* topology of $\N^*$.  By \cite[Corollary 3.3]{ADG}, weak* convergence of the sequence is equivalent to \textit{weak} convergence.  Since norm closed convex hulls and weakly closed convex hulls agree, there is a sequence of convex combinations of $\{\varphi_{F_n}\}$ that converges in norm to $E^*(\varphi|_\A)$.  Now for any convex combination $\sum_{j=1}^N c_j \varphi_{F_{n_j}}$, for $n \geq \max\{n_j\}$ the facts in \eqref{E:paving} give
$$\left\| \varphi_{F_n}  - E^*(\varphi|_\A) \right\| = \left\| \left(\sum_{j=1}^N c_j \varphi_{F_{n_j}} - E^*(\varphi|_\A)\right)_{F_n}\right\| \leq \left\| \sum_{j=1}^N c_j \varphi_{F_{n_j}} - E^*(\varphi|_\A) \right\|.$$
It follows that $\varphi_{F_n} \to E^*(\varphi|_\A)$ in norm.

(4) $\Rightarrow$ (5): Since $\N_*$ is a norm-closed subspace of $\N^*$, (2) implies $E^*(\varphi|_\A) \in \N_*$.  Any normal state on $\A$ is the restriction of a normal state on $\N$ (\cite[Exercise III.5.1]{T}), so $E^*(\A_*) \subseteq \N_*$, and $E$ is normal.  

(5) $\Rightarrow$ (1): This is Theorem \ref{T:facts}(2).
\end{proof}

\begin{corollary}$($\cite[Theorem 2]{KS}$)$ \label{T:ks}
There is more than one proper CE onto a continuous MASA in $\B(\ell^2)$.
\end{corollary}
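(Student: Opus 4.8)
The plan is to read off the corollary from the equivalence (1)$\,\Leftrightarrow\,$(5) in Theorem \ref{T:unique}, thereby recovering \cite[Theorem 2]{KS} without any Fourier analysis. That equivalence asserts that the proper CE onto $\A$ is \emph{unique} exactly when it is normal. Hence it suffices to check two things: that at least one proper CE onto the continuous MASA exists, and that no normal CE onto it exists. Together these force the collection of proper CEs to contain more than one element.

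The first point is immediate: the continuous MASA $\A \subseteq \B(\ell^2)$ is singly generated (concretely, $L^\infty[0,1]$ multiplying on $L^2[0,1] \cong \ell^2$), so it carries a sequential full subset, and feeding such a subset into the limiting construction of Section \ref{S:back} yields a proper CE. For the second point I would use Theorem \ref{T:facts}(3): $\B(\ell^2)$ is semifinite, so a normal CE onto $\A$ exists iff $\A$ is generated by finite projections, which in $\B(\ell^2)$ means finite-rank projections. I claim $\A$ contains no nonzero finite-rank projection. Indeed, if $0 \neq p \in \A$ had finite rank, then since $\A$ is continuous $p$ is not minimal, so there is a nonzero projection $p_1 \in \A$ with $p_1 \lneq p$; iterating gives projections $p \gneq p_1 \gneq p_2 \gneq \cdots$ in $\A$ whose ranks form a strictly decreasing sequence of positive integers, an impossibility. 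Thus $\A$ is not generated by finite projections and admits no normal CE.

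Combining the two points via Theorem \ref{T:unique}: were the proper CE unique it would be normal, contradicting the absence of normal CEs; since a proper CE does exist, there must be more than one. There is no hard analytic step here --- the entire burden has been shifted onto Theorem \ref{T:unique} and the finite-projection criterion of Theorem \ref{T:facts}(3). The only genuinely verificational point is the elementary rank-descent argument showing that a continuous MASA in $\B(\ell^2)$ has no nonzero finite projection, which replaces the delicate Kadison--Singer Fourier estimate entirely.
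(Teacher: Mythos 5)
Your proof is correct and follows essentially the paper's own route: the paper deduces the corollary immediately from Theorem \ref{T:unique} together with the nonexistence of a normal CE onto the continuous MASA, which it imports as Theorem \ref{T:facts}(4) (citing \cite[Remark 5]{KS}). The only difference is that you rederive that nonexistence yourself from Tomiyama's finite-projection criterion, Theorem \ref{T:facts}(3), via the elementary rank-descent showing a continuous MASA in $\B(\ell^2)$ contains no nonzero finite-rank projection --- a sound, self-contained substitute for the citation, with the rest of the argument (singly-generated $\A$ gives a sequential full subset, so a unique proper CE would be normal by Theorem \ref{T:unique}) matching the paper exactly.
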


\begin{proof}
Immediate from Theorems \ref{T:facts}(4) and \ref{T:unique}.
\end{proof}

As mentioned in the Introduction, Kadison-Singer's original proof of Corollary \ref{T:ks} used Fourier analysis.

\begin{corollary} \label{T:normal}
If there is a unique CE onto a singly-generated MASA, then this CE is normal and faithful.
\end{corollary}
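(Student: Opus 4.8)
The plan is to reduce the statement to Theorem \ref{T:unique}, in which essentially all of the analytic content already resides. First I would observe that the hypothesis supplies a single CE $E : \N \to \A$. Since every proper CE is in particular a CE, and at least one proper CE exists by the paving construction of Section \ref{S:back}, the uniqueness hypothesis forces $E$ to coincide with that (indeed any) proper CE. In other words, $E$ is the unique proper CE, which is precisely condition (1) of Theorem \ref{T:unique}.

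Next I would bring in the assumption that $\A$ is singly-generated. As recorded in Section \ref{S:back}, a singly-generated MASA always admits a sequential full subset $\{F_n\}$ (take an increasing family drawn from a countable generating set of projections). This is exactly the standing hypothesis under which Theorem \ref{T:unique} upgrades the equivalence of (1)--(3) to the full equivalence of (1)--(5). Invoking the implication (1) $\Rightarrow$ (5) then gives at once that $E$ is normal.

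Finally, faithfulness requires no further work: by Theorem \ref{T:facts}(3), a normal CE onto a MASA is automatically faithful, so $E$ is both normal and faithful.

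The only point demanding any care — and it is hardly an obstacle — is the first reduction, namely that uniqueness among \emph{all} CEs entails uniqueness among the a priori smaller class of \emph{proper} CEs; this is immediate once one notes that proper CEs form a subclass of CEs and that this subclass is nonempty. All the genuine machinery (the weak* limits of pavings, and the norm-convergence argument passing through \cite[Corollary 3.3]{ADG}) is already encapsulated in Theorem \ref{T:unique}, so the corollary itself demands no new estimates.
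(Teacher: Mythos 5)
Your proof is correct and follows exactly the paper's own argument: uniqueness plus the existence of proper CEs makes $E$ the unique proper CE, the singly-generated hypothesis supplies the sequential full subset needed to invoke (1) $\Rightarrow$ (5) of Theorem \ref{T:unique} for normality, and Theorem \ref{T:facts}(3) gives faithfulness. No gaps; nothing to add.
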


\begin{proof}
Since proper CEs always exist, a unique CE is the unique proper CE.  A singly-generated MASA has a sequential full subset, so the conclusion follows from Theorems \ref{T:unique} and \ref{T:facts}(3).
\end{proof}

As in Section \ref{S:back}, we let $W$ be the net of all finite sets of projections from $\A$.  We will say that an operator $x \in \N$ is \textit{pavable} if there is a sequence $\{F_n\} \subset W$ such that $x_{F_n}$ converges in norm to an element of $\A$.  Kadison-Singer showed that in $\B(\ell^2)$, this is equivalent to requiring that whenever two states of $\N$ restrict to the same pure state of $\A$, they agree on $x$ (\cite[Lemma 5]{KS}).  Their arguments remain valid in our setting.

\begin{theorem} \label{T:paving}
For a CE $E: \N \to \A$, the following conditions are equivalent:
\begin{enumerate}
\item every pure state of $\A$ has a unique state extension to $\N$;
\item every operator in $\N$ is pavable;
\item $\forall x \in \N$, $\lim_{F \in W} x_F= E(x)$ (norm limit);
\item $\forall x \in \N$, $\lim_{F \in W} x_F= E(x)$ (weak limit);
\item $\forall\varphi \in \N^*$, $w^*\lim_{F \in W} \varphi_F=E^*(\varphi |_\A)$;
\item if $\varphi$ is a pure state of $\N$ that restricts to a pure state of $\A$, then $w^*\lim_{F \in W} \varphi_F=E^*(\varphi |_\A)$.
\end{enumerate}
\end{theorem}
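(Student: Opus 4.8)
The plan is to verify the cycle
$(1)\Leftrightarrow(2)\Rightarrow(3)\Rightarrow(4)\Leftrightarrow(5)\Rightarrow(6)\Rightarrow(1)$,
arranging matters so that each link is either essentially formal or isolates a single substantive point. The formal links come first. For $(1)\Leftrightarrow(2)$ I would quantify over all $x$ the Kadison--Singer equivalence (\cite[Lemma 5]{KS}) recalled just above the statement: a single $x$ is pavable exactly when any two states agreeing on $\A$ as a common pure state agree on $x$, so ``every operator is pavable'' is the same as ``two states with a common pure restriction to $\A$ are equal,'' which is $(1)$. The implication $(3)\Rightarrow(4)$ is merely that norm convergence implies weak convergence. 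For $(4)\Leftrightarrow(5)$ I would unwind both statements using $\varphi_F(x)=\varphi(x_F)$ from \eqref{E:paving} together with $E^*(\varphi|_\A)(x)=\varphi(E(x))$: each condition then reads precisely as $\varphi(x_F)\to\varphi(E(x))$ for all $x\in\N$ and $\varphi\in\N^*$, so they coincide. Finally $(5)\Rightarrow(6)$ is just the restriction of $(5)$ to the special functionals named in $(6)$.

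The first genuine work is $(2)\Rightarrow(3)$, which I would handle by a monotonicity argument over the net $W$. I would first pin down any paving limit: if $x_{F_n}\to a\in\A$ in norm, then applying the norm-continuous $E$ and the bimodule property (Theorem \ref{T:facts}(1)) gives $E(x_{F_n})=(E(x))_{F_n}=E(x)$, forcing $a=E(x)$. The key estimate is that $F\ge G$ implies $\|x_F-E(x)\|\le\|x_G-E(x)\|$; this holds because pavings compose, $x_F=(x_G)_F$, while $(E(x))_F=E(x)$, so that $x_F-E(x)=(x_G-E(x))_F$, and then the contraction $\|y_F\|\le\|y\|$ from \eqref{E:paving} applies. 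Thus $\{\|x_F-E(x)\|\}_{F\in W}$ is a decreasing net of nonnegative reals and converges to its infimum; pavability drives that infimum to $0$, giving norm convergence $x_F\to E(x)$ over all of $W$, which is $(3)$.

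The heart of the theorem is $(6)\Rightarrow(1)$, and I expect this to be the main obstacle. The crucial observation is that for a \emph{pure} state $\varphi$ of $\N$ whose restriction $\rho=\varphi|_\A$ is pure, the pavings $\varphi_F$ are constant and equal to $\varphi$. Indeed $\rho$ is a character, so in any partition $F$ exactly one projection $p_0$ has $\rho(p_0)=1$ and the rest have $\rho(p)=0$; the latter contribute nothing since the positive functional $p\varphi p$ has norm $\varphi(p)=\rho(p)=0$, while a Cauchy--Schwarz argument shows $\varphi$ concentrates on $p_0$, giving $\varphi_F=p_0\varphi p_0=\varphi$. Hence $(6)$ forces $\varphi=E^*(\rho)$ for every such $\varphi$. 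To upgrade this to uniqueness of \emph{all} extensions, I would apply Krein--Milman to the weak*-compact convex set $S_\rho$ of state extensions of a pure $\rho$: its extreme points are pure states of $\N$ restricting to $\rho$, hence all equal $E^*(\rho)$, so $S_\rho=\{E^*(\rho)\}$ and $(1)$ follows. The delicate points to get right are the concentration identity $\varphi_F=\varphi$ and the reduction from arbitrary extensions to extreme (pure) ones.
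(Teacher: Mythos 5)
Your proposal is correct and takes essentially the same route as the paper: the same appeal to \cite[Lemma 5]{KS} for $(1)\Leftrightarrow(2)$, the same refinement--contraction estimate $x_F - E(x) = (x_G - E(x))_F$ (with $a = E(x)$ pinned down by applying $E$) for $(2)\Rightarrow(3)$, the same duality $\varphi_F(x)=\varphi(x_F)$ for $(4)\Leftrightarrow(5)$, and the same concentration identity $\varphi_F=\varphi$ for pure extensions together with the extreme-point description of the set of state extensions for $(6)\Rightarrow(1)$. The only difference is cosmetic: you spell out the Cauchy--Schwarz concentration step and the Krein--Milman argument that the paper compresses into a parenthetical remark and a cited ``standard fact.''
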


\begin{proof} (1) $\Leftrightarrow$ (2): As mentioned just before the theorem, this can be proved in the same way as \cite[Lemma 5]{KS}.

(2) $\Leftrightarrow$ (3): The reverse implication is trivial, so we suppose that $x$ is pavable: there are $a \in \A$ and $\{F_n\} \subset W$ such that $\|x_{F_n} - a \| \to 0$.  Then for any $n$, $\|E(x) - a\| =|E(x_{F_n} - a)\| \leq \|x_{F_n} - a\|$, so $a = E(x)$.  And for any $F \geq F_n$, $\|x_F - E(x)\| = \|(x_{F_n} - E(x))_F\| \leq \|x_{F_n} - E(x)\|$.

(3) $\Rightarrow$ (4): Trivial. 

(4) $\Leftrightarrow$ (5): These are the same computations as (2) $\Leftrightarrow$ (3) in Theorem \ref{T:normal} (with obvious small modifications).

(5) $\Rightarrow$ (6): Trivial.

(6) $\Rightarrow$ (1): Suppose there are two pure states $\varphi_1, \varphi_2$ on $\N$ such that $\varphi_1 |_\A=\varphi_2 |_\A$ is pure.  As observed in the first paragraph of the proof of \cite[Lemma 5]{KS}, for $j=1,2$ and $F \in W$ one has $(\varphi_j)_F = \varphi_j$.  (This is essentially because for a projection in $\A$, $\varphi_j$ annihilates either the projection or its complement.)  Thus the condition in (6) implies
$$\varphi_1 = w^*\lim (\varphi_1)_F = E^*(\varphi_1 |_\A) = E^*(\varphi_2 |_\A) = w^*\lim (\varphi_2)_F = \varphi_2.$$
It is a standard fact that a pure state on $\A$ has a unique state extension if and only if it has a unique pure state extension.  (The pure state extensions are the extreme points of the convex weak* compact set of state extensions.)
\end{proof}

\begin{remark} \label{T:stronger}
Since the first two items in Theorem \ref{T:paving} do not refer to $E$, they clearly imply uniqueness of the CE.  Regarding unique state extensions of pure states, this well-known observation can also be seen more directly and goes back to Kadison-Singer.
\end{remark}

\section{MASAs of semifinite von Neumann algebras} \label{S:CE1}

\begin{theorem} \label{T:typeI}
If $\N$ is type I, the following conditions are equivalent:
\begin{enumerate}
\item there is a normal CE onto $\A$;
\item there exist abelian projections $\{p_t\} \subset \A$ with $\sum p_t = 1$.
\end{enumerate}
These conditions imply
\begin{enumerate}
\item[(3)] there is a unique CE from $\N$ onto $\A$.
\end{enumerate}
If $\A$ has a sequential full subset, then all three conditions are equivalent.
\end{theorem}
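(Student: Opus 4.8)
The plan is to get the two explicit directions out of condition (2) directly, to close the final clause with the machinery of Theorem \ref{T:unique}, and to isolate the one genuinely structural implication (1) $\Rightarrow$ (2) as the crux. I would first prove (2) $\Rightarrow$ (1) and (2) $\Rightarrow$ (3) together from the decomposition that condition (2) forces. If $\{p_t\} \subset \A$ are abelian with $\sum p_t = 1$, then each corner $p_t \N p_t$ is abelian, and since $p_t \in \A$ the cut-down $\A p_t$ is a MASA of $p_t \N p_t$; being maximal abelian inside an abelian algebra forces $\A p_t = p_t \N p_t$. Consequently $E(x) = \sum_t p_t x p_t$ lands in $\A$ (each summand lies in $\A p_t \subseteq \A$), is a norm-one idempotent fixing $\A$, and is normal as a strongly convergent sum of the normal maps $x \mapsto p_t x p_t$; this gives (1).

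For (3), let $E'$ be any CE onto $\A$. The bimodule property of Theorem \ref{T:facts}(1) gives $p_s E'(x) p_t = E'(p_s x p_t)$. For $s \neq t$ this vanishes, since $p_s \A p_t = p_s p_t \A = 0$; for $s = t$ we get $E'(p_t x p_t) = p_t x p_t$, because $p_t x p_t \in p_t \N p_t = \A p_t \subseteq \A$ is already fixed by $E'$. Thus $p_s(E'(x) - E(x))p_t = 0$ for all $s,t$, and summing strongly over the $p_t$ yields $E' = E$. Note that this argument never invokes normality of $E'$, so it rules out improper CEs as well. For the final clause, assuming $\A$ has a sequential full subset $\{F_n\}$, I would close the loop by proving (3) $\Rightarrow$ (1): a unique CE is in particular the unique proper CE, so the equivalence (1) $\Leftrightarrow$ (5) of Theorem \ref{T:unique}, which is exactly what the sequential full subset hypothesis supplies, shows the CE is normal. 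Together with (1) $\Leftrightarrow$ (2) and (2) $\Rightarrow$ (3), all three conditions are then equivalent.

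This leaves the hard implication (1) $\Rightarrow$ (2). A normal CE is faithful by Theorem \ref{T:facts}(3), whence by the same item $\A$ is generated by finite projections; the task is to manufacture abelian projections of $\N$ inside $\A$ summing to $1$. I would argue by maximality: let $\{p_t\} \subset \A$ be a maximal orthogonal family of abelian projections of $\N$, set $p = \sum p_t \in \A$, and suppose for contradiction that $q = 1 - p \neq 0$. Restricting $E$ to $q\N q$ gives a normal CE onto the MASA $\A q$ (again type I and generated by finite projections), so it suffices to produce one nonzero abelian projection of $q\N q$ lying in $\A q$; such a projection is automatically abelian in $\N$, lies in $\A$, and is orthogonal to every $p_t$, contradicting maximality. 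Since the center of $q\N q$ is contained in the MASA $\A q$, I would cut by central projections to reduce to the homogeneous type $I_n$ case and then use the (semifinite) center-valued trace: a MASA of a non-abelian corner is non-trivial, so any nonzero projection of $\A q$ that is not already abelian strictly dominates a nonzero subprojection in $\A q$, and because the normalized trace is valued in $\{0,\tfrac1n,\tfrac2n,\dots\}$, a descent on this integer-valued ``rank'' must reach an abelian projection on some nonzero central part.

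The main obstacle is precisely this descent in full generality. When the center fails to be $\sigma$-finite one cannot simply minimize a scalar quantity, so the termination has to be organized by selecting, at each stage, a nonzero central projection on which the rank attains its minimum, followed by an exhaustion argument to assemble the global abelian projection; and the reduction to the homogeneous case itself must be carried out over the possibly uncountable family of homogeneous summands. This is the one place where genuine type I structure theory, rather than the soft functional-analytic machinery used for the other implications, appears to be unavoidable.
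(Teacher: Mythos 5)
Your proposal is correct in outline and, for most of the theorem, coincides with the paper's proof: your (2) $\Rightarrow$ (3) is the same computation (the corners $p_t\N p_t$ lie in $\A$, so the bimodule property of Theorem \ref{T:facts}(1) forces any CE to equal $x \mapsto \sum_t p_t x p_t$, which is visibly normal — this simultaneously gives (2) $\Rightarrow$ (1)), and your (3) $\Rightarrow$ (1) via the sequential full subset and Theorem \ref{T:unique} is exactly the paper's closing step. The genuine divergence is in (1) $\Rightarrow$ (2). The paper disposes of it in two lines: by Theorem \ref{T:facts}(3), a normal CE exists iff $\A$ is generated by finite projections, and then for each finite $q \in \A$ the algebra $q\A$ is a MASA in the \emph{finite} type I algebra $q\N q$, at which point the entire structural content — that such a $q$ is a sum of abelian projections lying in $\A$ — is outsourced to \cite[Exercise 6.9.23]{KR}. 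What you call the ``hard implication'' is thus, in the paper, a citation; your proposal amounts to reproving that exercise via a center-valued-trace descent, which is indeed the standard route to it. Your approach buys self-containedness; the paper's buys brevity and sidesteps the bookkeeping you correctly identify as delicate.

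As written, though, your descent has one concrete misstep: you cut $q\N q$ by central projections to reach homogeneous summands \emph{before} passing to a finite corner, and the homogeneous summands of $q\N q$ may be of type $I_n$ with $n$ infinite, where there is no normalized center-valued trace with values in $\{0, \tfrac1n, \dots, 1\}$ and the rank descent does not terminate. The fix is the paper's own move, and you have the needed hypothesis in hand: since $\A q$ is generated by finite projections, choose a nonzero finite projection $r \in \A q$ and run the whole argument inside the finite type I algebra $r\N r$ (where $\A r$ is a MASA and all homogeneous multiplicities are finite); an abelian projection of $r\N r$ under $r$ is abelian in $\N$, which is all your maximality argument requires. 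With that repair, plus the exhaustion over central pieces that you sketch (standard, if tedious, when the center is not $\sigma$-finite), your (1) $\Rightarrow$ (2) is a complete proof of the cited exercise rather than an appeal to it.
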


\begin{proof}

(1) $\Leftrightarrow$ (2): By Theorem \ref{T:facts}(3) the existence of a normal CE is equivalent to $\A$ being generated by finite projections.  If $q \in \A$ is finite, then $q\A$ is a MASA in the finite type I algebra $q\N q$, so $q$ is a sum of abelian projections (\cite [Exercise 6.9.23]{KR}).

(2) $\Rightarrow$ (3): Let $\{p_t\}$ be abelian projections in $\A$ such that $\sum p_t = 1$.  Note that for any $a \in \A$,
$p_s a p_t = \delta_{st} p_s a p_s$.  Further note that for any $x \in \N$, $p_s x p_s$ belongs to the abelian algebra $p_s \N p_s$, so it commutes with $p_s \A$ and thus all of $\A$.  Since $\A$ is a MASA, $p_s x p_s \in \A$.  Now let $E: \N \to \A$ be any CE and compute
$$E(x) = \left(\sum p_s \right) E(x) \left(\sum p_t \right) = \sum p_s E(x) p_s = \sum E(p_s x p_s) = \sum p_s x p_s.$$
(All sums should of course be interpreted as $\sigma$-strong limits of finite sums.)  Thus the only CE from $\N$ onto $\A$ is $x \mapsto \sum p_s x p_s$, which is visibly normal.

(3) $\Rightarrow$ (1): Assuming the sequential full subset, this follows from Theorem \ref{T:unique}.
\end{proof}




\begin{remark} \label{T:sfs}
If $\N$ is type I, $\A$ has a sequential full subset, and in addition $\N$ has singly-generated center $\z$, then $\A$ must be singly-generated.  For assume these hypotheses, and let $\{F_n\}$ be a sequential full subset for $\A$.  After enlarging $\{F_n\}$ if necessary we may assume that $\z \subseteq W^*(\{F_n\})$.  Now we apply the classical fact that type I algebras are \textit{normal}, meaning that any subalgebra that contains the center is equal to its own double relative commutant (\cite[Exercice III.7.13b]{D}):
$$\A = \A' \cap \N = (W^*(\{F_n\})' \cap \N)' \cap \N = W^*(\{F_n\}).$$
Thus $\A$ is singly-generated (\cite[Lemma III.1.20]{T}).
\end{remark}

For the type II case discussed in the next two results, the main points are these: given a normal tracial state $\tau$ on $\N$, there is a singular state $\varphi$ on $\N$ that agrees with $\tau$ on $\A$; under a cardinality restriction, we can also ensure that $\A$ lies in the centralizer of $\varphi$; by Theorem \ref{T:facts}(5) this produces a singular CE.  Some related arguments can be found in \cite[Proposition 2.4, Corollary 2.5, and Paragraph 4.2]{Pop} and \cite[Lemma 4.2 and subsequent text]{P}.  We thank Sorin Popa for his suggestions on these constructions.

\begin{lemma}\label{T:popa}
If $\tau$ is a normal tracial state on the $\text{II}_1$ von Neumann algebra $\N$, then $\tau|_\A$ extends to a singular state on $\N$.  Actually any normal state on $\A$ extends to a singular state on $\N$.
\end{lemma}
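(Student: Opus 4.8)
The plan is to realise the desired singular state as an ultrafilter limit of normal states supported on shrinking projections that are \emph{transverse} to $\A$. First I note that $\A$ is automatically diffuse: a minimal projection $e$ of $\A$ would make $e\A e = \mathbb{C}e$ a MASA in $e\N e$, forcing $e\N e = \mathbb{C}e$, i.e. $e$ minimal in $\N$, which is impossible in type $\text{II}_1$. Let $E_\A \colon \N \to \A$ be the trace-preserving normal conditional expectation (it exists since every projection in the finite algebra $\N$ is finite, by Theorem \ref{T:facts}(3)). The heart of the construction is a decreasing sequence of projections $1 = f_0 \geq f_1 \geq f_2 \geq \cdots$ in $\N$ with $\tau(f_n) \to 0$ and $E_\A(f_n) = \tau(f_n)\,1$. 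Granting such a sequence, I would set $\varphi_n(x) = \tau(f_n x)/\tau(f_n)$ and define $\varphi = \lim_{n \to \omega} \varphi_n$ along a free ultrafilter $\omega$ on $\mathbb{N}$. Each $\varphi_n$ is a normal state, since $\varphi_n(x^*x) = \tau\big((xf_n)^*(xf_n)\big)/\tau(f_n) \geq 0$ and $\varphi_n(1) = 1$, so $\varphi$ is a state on $\N$.

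The two required properties would then follow cleanly. For the restriction, the bimodule property and trace-preservation of $E_\A$ give, for $a \in \A$, $\tau(f_n a) = \tau\big(E_\A(f_n)a\big) = \tau(f_n)\,\tau(a)$, whence $\varphi_n(a) = \tau(a)$; thus $\varphi|_\A = \tau|_\A$ exactly. For singularity I would use the paper's definition directly. Because the $f_n$ decrease, for $n \geq k$ we have $f_n f_k = f_n$, so $\varphi_n(f_k) = 1$; since $\{n : n \geq k\}$ is cofinite it lies in $\omega$, and therefore $\varphi(f_k) = 1$, i.e. $\varphi(1 - f_k) = 0$, for every $k$. Given any nonzero projection $p \in \N$, the estimate $\tau\big(p \wedge (1-f_k)\big) \geq \tau(p) - \tau(f_k) > 0$ for large $k$ produces a nonzero $q = p \wedge (1-f_k) \leq p$ with $0 \leq q \leq 1 - f_k$, so $\varphi(q) \leq \varphi(1-f_k) = 0$. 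Hence $\varphi$ is singular.

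The final assertion, that \emph{any} normal state $\psi$ on $\A$ extends to a singular state, I would deduce from the case $\psi = \tau|_\A$ by transporting via a module action. Writing $\psi = \tau(h\,\cdot)$ with density $h \in L^1(\A)_+$ and taking first $h$ bounded, I set $\varphi(x) = \varphi_0\big(h^{1/2} x h^{1/2}\big)$, where $\varphi_0$ is the singular extension of $\tau|_\A$ above. Since $h, a \in \A$ commute, $\varphi(a) = \varphi_0(ha) = \tau(ha) = \psi(a)$, and $\varphi(1) = \varphi_0(h) = 1$; moreover $\varphi = h^{1/2}\varphi_0 h^{1/2}$ is singular because, as recalled in Section \ref{S:back}, the left and right module actions preserve singularity. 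The general unbounded $h$ is then handled by approximating with $h \wedge m$ and passing to a weak* limit of the resulting singular extensions.

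The step I expect to be the real obstacle is the construction of the decreasing $\A$-spread sequence $\{f_n\}$, i.e. projections whose conditional expectation onto $\A$ is a scalar. Producing even a single $f$ with $E_\A(f) = \tfrac12\,1$ is equivalent to finding a self-adjoint unitary $s = 2f - 1$ with $E_\A(s) = 0$, a Kadison ``carpenter''-type problem for the diffuse MASA $\A$; the halving needed to pass from $f_n$ to $f_{n+1} \leq f_n$ requires such a symmetry inside a corner. For a Cartan MASA this is immediate from the normalizing partial isometries, and in the hyperfinite case one can take $f_n = \prod_{k \leq n} P_k$ with mutually commuting transverse projections $P_k = \tfrac12(1 + s_k)$, $E_\A(s_k) = 0$. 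The difficulty is to do this for an arbitrary, possibly singular, MASA, where one must genuinely exploit the abundance of projections and partial isometries in the $\text{II}_1$ algebra. This is precisely the spreading phenomenon behind the cited constructions of Popa, and it is where the substance of the proof lies; the ultrafilter argument above is a soft packaging of it.
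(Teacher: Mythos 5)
Your outline reproduces the skeleton of the paper's own proof: the paper likewise builds normal states $\varphi_n = 2^n\tau(\,\cdot\, p_n)$ from projections $p_n$ satisfying $E(p_n) = \tau(p_n)1 = 2^{-n}1$, takes a weak* limit point, and proves singularity by exactly your lattice estimate $\tau(p \wedge q) \geq \tau(p) + \tau(q) - 1$. But the step you explicitly defer --- producing the transverse projections --- is precisely where your proof is incomplete, and it is much easier than the ``carpenter-type'' problem you fear: no normalizing partial isometries, no Cartan or hyperfiniteness assumptions, and no halving inside corners are needed. Since $\A$ is diffuse (as you correctly observe), \cite[Exercise 6.9.29]{KR} provides, for each $n$, projections $q^n_1, \dots, q^n_{2^n} \in \A$ with sum $1$ that are mutually equivalent \emph{in} $\N$; the connecting partial isometries $v^n_{ij} \in \N$ (normalized so that $v^n_{ji} = (v^n_{ij})^*$ and $v^n_{ii} = q^n_i$) are merely elements of $\N$ and need not normalize $\A$, so possible singularity of the MASA is irrelevant. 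Then $p_n := 2^{-n}\sum_{i,j} v^n_{ij}$ is a projection, and the bimodule property gives $E(v^n_{ij}) = q^n_i E(v^n_{ij}) q^n_j = \delta_{ij} q^n_i$, hence $E(p_n) = 2^{-n}1$. In particular your single ``transverse'' symmetry exists trivially: $s = v^1_{12} + v^1_{21}$ is a self-adjoint unitary with $E(s) = 0$.

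Your second self-imposed obstacle, nesting $f_{n+1} \leq f_n$, is also unnecessary. The paper's $p_n$ are not nested; instead, since $\varphi_n(p_n) = 1$, any weak* limit point $\varphi$ satisfies $\varphi\bigl(\vee_{k \geq m} p_k\bigr) = 1$ while $\tau\bigl(\vee_{k \geq m} p_k\bigr) \leq 2^{-m+1}$, so $\varphi$ annihilates projections of trace arbitrarily close to $1$, and your own final estimate then yields singularity verbatim. For the second assertion, your transport $\psi \mapsto h^{1/2}\varphi_0 h^{1/2}$ is a legitimate variant of the paper's argument (the paper instead takes a weak* limit point of $\varphi_0 a_n$, where $\tau|_\A a_n \to \psi$ in norm, citing \cite[Theorem III.5]{Ak} for singularity of the limit); but your closing step ``passing to a weak* limit of the resulting singular extensions'' needs justification, because a weak* limit of singular states is not singular in general --- on $L^\infty[0,1]$ every state is a weak* limit of convex combinations of (singular) pure states. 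You should either invoke the same theorem of Akemann or, better, upgrade to norm convergence: Cauchy--Schwarz gives $\|b\varphi_0 b - c\varphi_0 c\| \leq 2\,\tau\bigl((b-c)^2\bigr)^{1/2}$ for commuting $0 \leq b, c \in \A$ with $\tau(b^2), \tau(c^2) \leq 1$, so with $b_m = (h \wedge m)^{1/2}$ the functionals $b_m \varphi_0 b_m$ form a norm-Cauchy sequence, and singular functionals are norm closed. These repairs are all available, but as written the proposal leaves the load-bearing construction unproved, so the lemma is not yet established.
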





\begin{proof}
After compressing by the support of $\tau$, which is central and thus an element of $\A$, we may assume that $\tau$ is faithful.  By \cite[Exercise 6.9.29]{KR}, for any $n$ there are projections $\{q^n_j\}_{j=1}^{2^n} \subset \A$ that are equivalent in $\N$ and have sum $1$.  For each $1 \leq i,j \leq 2^n$, let $v^n_{ij}$ be a partial isometry effecting the equivalence of $q^n_i$ and $q^n_j$, with the requirements that $v^n_{ji} = (v^n_{ij})^*$ and $v^n_{ii} = q^n_i$.  We set $p_n =2^{-n} \sum_{i,j=1}^{2^n}  v^n_{ij}$, which is easily checked to be a projection.  With $E$ the normal $\tau$-preserving CE onto $\A$, we also compute
$$E(v^n_{ij}) = E(q^n_i v^n_{ij} q^n_j) =  q^n_i E(v^n_{ij}) q^n_j = \delta_{ij}q^n_i \quad \Rightarrow \quad E(p_n) = 2^{-n} \sum_{i,j} E(v^n_{ij}) = 2^{-n} \sum_i q^n_i = 2^{-n}1.$$

Define a sequence of states on $\N$ by $\varphi_n=2^n\tau(\cdot p_n)$.  Note that
$$\varphi_n(a)=2^n\tau(a p_n)=2^n\tau(E(a p_n)) =2^n \tau(aE(p_n)) =2^n \tau(a(2^{-n}1)) = \tau(a), \qquad a \in \A,$$
so that any weak* limit point $\varphi$ of $\{\varphi_n\}$ in $\N^*$ extends $\tau|_\A$.  Moreover, for any $m$,
$$\varphi\left(\bigvee_{k=m}^\infty p_k\right) =(w^*\lim \varphi_n)\left(\bigvee_{k=m}^\infty p_k \right) \geq (w^*\lim \varphi_n)(p_n) = 1, \quad \tau\left(\bigvee_{k=m}^\infty p_k\right) \leq \sum_m^\infty \tau(p_k) = 2^{-m+1}.$$
Considering the complements of the projections $\vee_{k=m}^\infty p_k$, we see that $\varphi$ vanishes on projections of trace arbitrarily close to 1.  Now given any projection $p \in \N$, find another projection $q$ with $\varphi(q)=0$ and $\tau(p)+ \tau(q) >1$.  The formula $p - (p \wedge q) \sim (p \vee q) - q$ implies $\tau(p \wedge q) = \tau(p) + \tau(q) - \tau(p \vee q) > 0$.  Thus $0 \neq p\wedge q \leq p$ and $\varphi(p \wedge q) \leq \varphi(q) = 0$, as required to show that $\varphi$ is singular.

For the second sentence of the lemma, let $\psi$ be a normal state on $\A$.  After compressing by the support of $\psi$, which is $\sigma$-finite, we may assume that $\N$ admits a faithful normal tracial state $\tau$.  By the preceding argument, $\tau|_\A$ has a singular extension $\varphi$.

Since $\tau|_\A \A$ is norm dense in $\A_*$, there are $\{a_n\} \subset \A$ with $\tau|_\A a_n \to \psi$ in norm.  Let $\rho$ be a weak* limit point of the singular functionals $\{\varphi a_n\}$.  Necessarily $\rho$ is singular (\cite[Theorem III.5]{Ak}).  But $(\varphi a_n)|_\A = \tau|_\A a_n$ now converges both weak* to $\rho|_\A$ and in norm to $\psi$, so that $\rho$ must restrict to $\psi$ on $\A$.
\end{proof}

\begin{theorem} \label{T:sg}
If $\N$ is type II and $\A$ is singly-generated, then there are multiple CEs from $\N$ onto $\A$.
\end{theorem}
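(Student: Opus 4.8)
The plan is to lean on Corollary \ref{T:normal}: a unique CE onto the singly-generated MASA $\A$ would necessarily be normal, so to prove non-uniqueness it is enough to exhibit a CE that is \emph{not} normal. Whether any normal CE exists at all is decided by Theorem \ref{T:facts}(3), so I split the argument according to that dichotomy.

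First suppose $\A$ is \emph{not} generated by finite projections. Since $\N$ is semifinite, Theorem \ref{T:facts}(3) says there is no normal CE onto $\A$ whatsoever. A CE exists (abelian algebras are injective), and if it were the unique one it would be normal by Corollary \ref{T:normal} -- a contradiction. Hence there are at least two CEs, with no construction required. This case does occur in type II: for instance the tensor product of a continuous MASA in $\B(\ell^2)$ with a MASA in a $\text{II}_1$ factor is a singly-generated MASA in a $\text{II}_\infty$ algebra that contains no nonzero finite projection.

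It remains to handle the case that $\A$ \emph{is} generated by finite projections, where Theorem \ref{T:facts}(3) supplies a normal CE $E$ and I must manufacture a second, non-normal one. Then $\A$ contains a nonzero finite projection $p$, and because $\N$ is type II the corner $p\N p$ is type $\text{II}_1$ with $p\A$ a singly-generated MASA inside it. As $p \in \A$, every CE $\N \to \A$ is block-diagonal across $p$ and $1-p$ and is determined by a pair of CEs onto $p\A$ in $p\N p$ and onto $(1-p)\A$ in $(1-p)\N(1-p)$; it therefore suffices to produce two distinct CEs $p\N p \to p\A$. Normalizing the restriction of a faithful normal semifinite trace on $\N$ yields a faithful normal tracial state $\tau$ on $p\N p$, and Lemma \ref{T:popa} then gives a \emph{singular} state $\varphi$ on $p\N p$ agreeing with $\tau$ on $p\A$. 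By Theorem \ref{T:facts}(5) it would finish the proof to upgrade $\varphi$ to a singular extension of $\tau|_{p\A}$ having $p\A$ in its centralizer, since such a state corresponds to a singular, hence non-normal, CE distinct from $E$.

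This centralizer upgrade is the crux, and is where single generation is essential. Using that $p\A$ is singly generated, I fix a \emph{sequential} full subset $\{F_n\}$ whose union generates $p\A$ and form the pavings $\varphi_{F_n} = \sum_{q \in F_n} q\varphi q$. Each $\varphi_{F_n}$ is singular (the module actions preserve singularity and the sum is finite) and restricts to $\tau|_{p\A}$ (the pinching fixes $p\A$), while any weak* limit has $p\A$ in its centralizer, because each spectral projection of the generator is eventually refined by the $F_n$ and so enters the centralizer of $\varphi_{F_n}$ for large $n$. The real difficulty -- and the step I expect to be the main obstacle -- is that a weak* limit of singular functionals need not be singular (the singular functionals are weak* dense in $\N^*$). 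Here the sequential structure coming from single generation is decisive: $\{\varphi_{F_n}\}$ is a \emph{sequence}, so one can argue as in the second part of the proof of Lemma \ref{T:popa}, combining the Grothendieck-type property of von Neumann algebra duals that weak* convergence of sequences upgrades to weak convergence (\cite{ADG}, \cite{Ak}) with Mazur's theorem to pass to norm-convergent convex combinations, and then with the norm-closedness of the singular functionals to conclude that the limit stays singular. Forcing $p\A$ into the centralizer while preserving singularity is precisely what cannot be done by a naive weak* limit and is the heart of the matter.
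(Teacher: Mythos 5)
Your architecture is the paper's own: reduce via Corollary \ref{T:normal} to exhibiting a non-normal CE, pass to a type $\text{II}_1$ corner where Lemma \ref{T:popa} supplies a singular extension of a normal faithful state on the MASA, pinch along a countable generating family of projections, take a weak* limit point, and convert to a singular CE via Theorem \ref{T:facts}(5). (Your pavings $\varphi_{F_n}$ are exactly the paper's recursion $\varphi_n = p_n\varphi_{n-1}p_n + (1-p_n)\varphi_{n-1}(1-p_n)$, since the $p_n$ commute; your explicit two-case split and corner-gluing are folded by the paper into a single argument by contradiction, and that part of your write-up is correct.) But at the two delicate steps, your proposal has genuine holes. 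The first is the one you yourself flag: singularity of the limit. The Grothendieck--Mazur mechanism (\cite{ADG} plus norm-closedness of the singular part) applies only to a weak*-\emph{convergent} sequence, whereas what you have is a weak* \emph{limit point} of $\{\varphi_{F_n}\}$, i.e., the limit of a subnet. A type $\text{II}_1$ algebra is nonseparable in norm, so bounded parts of $\N^*$ are not weak*-metrizable, a cluster point of a sequence need not be a subsequence limit, and nothing forces the sequence itself to converge; the Grothendieck property says nothing about subnets. (Note also that the paper's appeal to \cite{ADG} occurs in Theorem \ref{T:unique}, where weak* convergence of the sequence is a hypothesis.) The paper closes this step with a different tool: \cite[Theorem III.5]{Ak}, invoked precisely to conclude that a weak* limit point of a \emph{sequence} of singular functionals is again singular; the same citation, not Grothendieck--Mazur, is what powers the second half of the proof of Lemma \ref{T:popa} that you gesture at.

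The second gap is the centralizer upgrade itself, which you dispatch in one clause. The claim that ``each spectral projection of the generator is eventually refined by the $F_n$'' is false as stated: only the countably many projections in $\bigcup F_n$ are eventually refined, while the generator has uncountably many spectral projections. More importantly, even granting a generating family of projections in the centralizer of the limit $\varphi$, you cannot conclude $p\A$ lies in the centralizer for free: the centralizer of a non-normal state is only a norm-closed *-subalgebra, not weak*-closed, so containing a weak*-dense subalgebra of $\A$ does not suffice. The paper spends the entire second half of its proof on exactly this point, adapting \cite[Proof of Theorem 11]{C}: it shows $\A_0 = \A \cap \{x : \varphi x = x\varphi\}$ is $\sigma$-strong* closed via a Cauchy--Schwarz estimate that crucially uses the normality of $\varphi|_\A$, whence $\A_0$ is a von Neumann subalgebra containing the generating projections and so equals $\A$. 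A smaller repair: normalizing the restriction of a faithful normal semifinite trace to $p\N p$ requires $\mathrm{Tr}(p) < \infty$, which a finite projection need not satisfy, and a faithful normal tracial \emph{state} requires $\sigma$-finiteness; the paper first compresses by the support of a normal state (as does the proof of Lemma \ref{T:popa}). So your plan is the right one, but both of its load-bearing steps need tools you did not supply.
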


\begin{proof}
Assume the hypotheses, and suppose toward a contradiction that $E: \N \to \A$ is the unique CE.  By Corollary \ref{T:normal}, $E$ is normal and faithful.  
By Theorem \ref{T:facts}(3) we know that $\A$ contains a nonzero finite projection $r$.  Let $\psi$ be any normal state on $r\A$; after compressing by the support of $\psi$, we may assume that $\psi$ is faithful on $\A$ and that $\N$ is type $\text{II}_1$.  Our strategy is to find a non-normal state extension of $\psi$ that has $\A$ in its centralizer, so that by Theorem \ref{T:facts}(5) there is also a non-normal CE onto $\A$.

Let $\{p_n\}$ be a countable generating set of projections for $\A$.  By Lemma \ref{T:popa} there is a singular $\varphi_1$ that restricts to $\psi$ on $\A$.  We recursively define $\varphi_n=p_n \varphi_{n-1}p_n+(1-p_n)\varphi_{n-1}(1-p_n)$, a singular state that restricts to $\psi$ on $\A$ and contains $\{p_1,p_2,\dots,p_n\}$ in its centralizer.  Let $\varphi$ be a weak* limit point of $\{\varphi_n\}$.  Then $\varphi$ is still singular (\cite[Theorem III.5]{Ak}), still restricts to $\psi$ on $\A$, and has all the $p_n$ in its centralizer.  The proof will be complete if we can show that $\A$ lies in its centralizer, which we do now by adapting the idea of \cite[Proof of Theorem 11]{C}.

Let $\A_0$ be the intersection of $\A$ and the centralizer of $\varphi$.  Then $\A_0$ is a *-algebra, and we claim that it is closed in the $\sigma$-strong* topology.  
For if $\{a_\alpha\} \subset \A_0$ is a net converging $\sigma$-strong* to $a \in \A$, then for any $x \in \N$ we have
\begin{align*}
|(a\varphi-\varphi a)(x)| &= |[(a - a_\alpha)\varphi - \varphi (a - a_\alpha)](x)| \\ &= \lim_\alpha |[(a - a_\alpha)\varphi - \varphi (a - a_\alpha)](x)|\\ &= \lim_\alpha |\varphi(x(a-a_\alpha)) - \varphi((a-a_\alpha)x)| \\ &\leq \lim_\alpha \varphi(xx^*)^{1/2} \varphi((a - a_\alpha)^*(a- a_\alpha))^{1/2} + \varphi((a - a_\alpha)(a- a_\alpha)^*)^{1/2} \varphi(x^*x)^{1/2} \\ &= 0,
\end{align*}
since $\varphi|_\A = \psi$ is normal.  Therefore $\A_0$, being $\sigma$-strong* closed, is a von Neumann subalgebra of $\A$.  We have already noted that $\{p_n\} \subset \A_0$, so $\A_0 = \A$; i.e., $\A$ lies in the centralizer of $\varphi$.
\end{proof}

\begin{corollary} \label{T:improper}
There is an improper CE onto a singly-generated MASA in a type $\text{II}_1$ algebra.
\end{corollary}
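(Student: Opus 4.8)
The plan is to deduce this purely by combining Theorem \ref{T:sg} with the characterization of proper CEs in Theorem \ref{T:facts}(2), so the whole argument is a matter of bookkeeping once the right example is in place. First I would fix a concrete instance satisfying the hypotheses: take $\N$ to be the hyperfinite $\text{II}_1$ factor and let $\A$ be a singly-generated continuous MASA inside it, for example a copy of $L^\infty[0,1]$ generated by a single self-adjoint element. This supplies a type $\text{II}_1$ algebra containing a singly-generated MASA, as needed.

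Next I would exhibit a normal CE. Since every projection in a type $\text{II}_1$ algebra is finite, $\A$ is generated by finite projections, so Theorem \ref{T:facts}(3) produces a normal CE $E \colon \N \to \A$ (concretely, the trace-preserving conditional expectation). The decisive structural input is then Theorem \ref{T:facts}(2): a normal CE onto a MASA is the \emph{unique} proper CE. Hence $E$ is the only proper CE onto $\A$, and in particular there is exactly one proper CE for this inclusion.

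On the other hand, the hypotheses of Theorem \ref{T:sg} hold verbatim here --- $\N$ is type II and $\A$ is singly-generated --- so that theorem yields \emph{multiple} CEs from $\N$ onto $\A$. Comparing counts, there is precisely one proper CE but strictly more than one CE in total; therefore at least one of these CEs fails to be proper. That CE is the desired improper CE onto a singly-generated MASA in a type $\text{II}_1$ algebra.

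I do not expect a genuine obstacle: the corollary is essentially a pigeonhole consequence of the two preceding theorems, with Theorem \ref{T:facts}(2) doing the real work by pinning the proper CE down to a single map. The only point requiring a moment's care is verifying that the chosen example meets all hypotheses at once (type $\text{II}_1$, singly-generated MASA, generation by finite projections), but each of these is standard for $L^\infty[0,1]$ sitting as a MASA in the hyperfinite $\text{II}_1$ factor.
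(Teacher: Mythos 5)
Your proposal is correct and is essentially the paper's own argument: the normal (trace-preserving) CE is the unique proper CE by Theorem \ref{T:facts}(2), while Theorem \ref{T:sg} guarantees additional CEs, so some CE must be improper. The only difference is that you instantiate a concrete example ($L^\infty[0,1]$ in the hyperfinite $\text{II}_1$ factor), which the paper leaves implicit since any type $\text{II}_1$ algebra with a singly-generated MASA works.
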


\begin{proof}
The normal CE is the unique proper CE, by Theorem \ref{T:facts}(2).
By Theorem \ref{T:sg} there are others.
\end{proof}

To our knowledge there had been no previous examples of improper CEs onto MASAs.

\begin{theorem} \label{T:main}
If $\A$ is singly-generated and $\N$ is semifinite, the following conditions are equivalent:
\begin{enumerate}
\item there exist abelian projections $\{p_t\} \subset \A$ with $\sum p_t = 1$ (and in particular, $\N$ is type I);
\item no normal state of $\A$ has a non-normal state extension to $\N$;
\item no normal state of $\A$ has a singular state extension to $\N$;
\item there is a unique CE $E : \N \to \A$ that is also normal and faithful;
\item there is a unique CE $E : \N \to \A$.
\end{enumerate}
\end{theorem}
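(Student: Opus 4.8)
The plan is to establish the cycle (1) $\Rightarrow$ (2) $\Rightarrow$ (4) $\Rightarrow$ (5) $\Rightarrow$ (1) together with the equivalence (2) $\Leftrightarrow$ (3). Throughout I use that a singly-generated $\A$ is $\sigma$-finite, so it carries a normal faithful state $\psi$, and that it admits a sequential full subset. The core new input is (1) $\Rightarrow$ (2). Assuming $\A = \sum p_t \N p_t$ with each $p_t$ abelian, let $\mu$ be a normal state of $\A$ and let $\omega$ extend $\mu$ to $\N$; write $\omega = \omega_n + \omega_s$ for its normal and singular parts. The key claim is that $\omega_s|_\A$ is singular on $\A$: given a nonzero projection $a \in \A$, pick $t$ with $p_t a \neq 0$, so that $p_t a = a p_t \leq a$ is a nonzero projection dominated by the abelian projection $p_t$; singularity of $\omega_s$ yields a nonzero projection $a' \leq p_t a$ with $\omega_s(a') = 0$, and since $a' \leq p_t$ we get $a' \in p_t \N p_t \subseteq \A$. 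Thus $\mu = \omega_n|_\A + \omega_s|_\A$ exhibits the normal functional $\mu$ as normal plus singular on $\A$, forcing $\omega_s|_\A = 0$; as $\omega_s \geq 0$ and $1 \in \A$, this gives $\omega_s = 0$, so every extension of $\mu$ is normal.

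The implication (2) $\Rightarrow$ (4) is then immediate from Theorem \ref{T:facts}: any CE $E$ yields an extension $E^*(\psi)$ of the normal state $\psi$, which by (2) must be normal, whence $E$ is normal by part (5); since there is at most one normal CE (part (2)) and it is automatically faithful (part (3)), we obtain a unique normal faithful CE. The implication (4) $\Rightarrow$ (5) is trivial. For (5) $\Rightarrow$ (1), I split $\N$ into its type II and type I parts via a central projection $z$, which lies in $\A$ since $\A \supseteq \z$. If $z \neq 0$, then $z\A$ is a singly-generated MASA in the type II algebra $z\N$, so Theorem \ref{T:sg} produces two distinct CEs on that corner; combining each with a single fixed CE on $(1-z)\N$ gives two distinct CEs on $\N$, contradicting (5). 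Hence $\N$ is type I, and Theorem \ref{T:typeI} (invoking the sequential full subset) converts the unique CE into abelian projections summing to $1$.

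It remains to fit in (3). The implication (2) $\Rightarrow$ (3) is trivial, as singular states are non-normal. For (3) $\Rightarrow$ (2) I argue contrapositively: suppose a normal state $\mu$ of $\A$ has a non-normal extension $\omega = \omega_n + \omega_s$ with $\omega_s \neq 0$. Then $\omega_s|_\A = \mu - \omega_n|_\A$ is a difference of normal functionals, hence normal, and it is nonzero because $\omega_s|_\A(1) = \omega_s(1) = \|\omega_s\| > 0$. Normalizing, $\omega_s/\omega_s(1)$ is a singular extension of the normal state $\omega_s|_\A/\omega_s(1)$, contradicting (3). This closes the loop and makes all five conditions equivalent.

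I expect the main obstacle to be (5) $\Rightarrow$ (1): excluding a type II summand rests entirely on Theorem \ref{T:sg}, whose proof through Lemma \ref{T:popa} is the analytic heart of the paper, while the central decomposition and the lifting of CEs across the two corners are routine. Within the present argument itself, the genuinely delicate step is (1) $\Rightarrow$ (2), where the abelianness of each $p_t$ is precisely what guarantees that a subprojection produced inside a single block already belongs to $\A$; this is the structural fact that lets me conclude the singular part of an arbitrary extension vanishes, replacing the Fourier-analytic estimate of Kadison--Singer.
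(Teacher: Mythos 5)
Your overall architecture is sound, but one supporting claim is false: a singly-generated $\A$ need \emph{not} be $\sigma$-finite. For an uncountable set $I$, multiplication by a bounded injective function $f$ on $\ell^2(I)$ has commutant $\ell^\infty(I)$, so $\ell^\infty(I)=\{f\}''$ is singly generated; it is a MASA in $\B(\ell^2(I))$ squarely covered by the theorem (it even satisfies condition (1)), yet it carries no faithful normal state. You use $\sigma$-finiteness only in (2) $\Rightarrow$ (4), where you feed a faithful normal $\psi$ into Theorem \ref{T:facts}(5), whose proof needs faithfulness (density of $\psi\A$ in $\A_*$); as written, that step fails precisely on such examples. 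The repair is immediate and actually simplifies the step: apply (2) to \emph{every} normal state $\mu$ of $\A$ — for any CE $E$, $E^*(\mu)$ is a state extension of $\mu$, hence normal by (2), so $E^*(\A_*)\subseteq\N_*$ since states span $\A_*$, and $E$ is normal by the characterization of normality in Section \ref{S:back}; then \ref{T:facts}(2) and (3) give uniqueness and faithfulness exactly as you conclude. (The paper sidesteps the issue differently: it extracts normality of a unique CE from Theorem \ref{T:unique} and Corollary \ref{T:normal}, which require only a sequential full subset, not a faithful normal state.)

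With that patch, your proof is correct, and its organization genuinely differs from the paper's in how (3) enters. The paper proves (3) $\Rightarrow$ (1) directly by contradiction, in two cases: when $\A$ contains no finite projections it realizes every normal state of $\A$ as the restriction of a singular state via the annihilator $\I^\perp$ of the norm-closed ideal generated by the finite projections; otherwise it reduces to type $\text{II}_1$ and invokes Lemma \ref{T:popa}. You instead prove the soft implication (3) $\Rightarrow$ (2) by normalizing the singular part of a putative non-normal extension, and push all the analytic weight into (5) $\Rightarrow$ (1) via Theorem \ref{T:sg} (hence Lemma \ref{T:popa}), bypassing the paper's quotient-by-ideal case entirely. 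What the paper's direct route buys is the refinement recorded immediately after its proof — identifying, in terms of the supremum $z$ of the abelian projections in $\A$, exactly which normal states admit singular extensions — which your cycle does not recover, though it is not needed for the theorem itself. Your (1) $\Rightarrow$ (2) is a mild variant of the paper's: you show $\omega_s|_\A$ is outright singular on $\A$ (using $a'\leq p_t$ and $p_t\N p_t\subseteq\A$ to keep the subprojection inside $\A$), whereas the paper shows $\varphi_2(p_t)=0$ corner by corner and then sums using normality of $\varphi_2|_\A$; both are correct and of comparable length. Finally, your (5) $\Rightarrow$ (1) spells out the central decomposition and the gluing of CEs across $z\N\oplus(1-z)\N$ that the paper leaves implicit in its one-line "(1) $\Leftrightarrow$ (5)."
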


\begin{proof}
(1) $\Rightarrow$ (2): Suppose that the $\{p_t\}$ exist, and that $\varphi$ is a state of $\N$ such that $\varphi|_\A$ is normal.  Let $\varphi = \varphi_1 + \varphi_2$ be the unique decomposition in which $\varphi_1$ is normal and $\varphi_2$ is singular (\cite[Theorem III.2.14]{T}); necessarily $\varphi_1$ and $\varphi_2$ are positive.  For any $t$ it follows from the definition of singularity that the restriction $\varphi_2|_{p_t\N p_t}$ is still singular.  On the other hand $p_t$ is an abelian projection and $\A$ is a MASA, so $p_t\N p_t \subseteq \A$ and by our assumption $\varphi_2|_{p_t\N p_t}$ must also be normal.  But a singular normal positive functional is zero, and therefore $\varphi_2(p_t) = 0$.  Now $\varphi_2|_\A = \varphi|_\A - \varphi_1|_\A$ is normal, so $\varphi_2(1) = \varphi_2(\sum p_t) = \sum \varphi_2(p_t) = 0$.  Thus $\varphi_2 = 0$, and $\varphi = \varphi_1$ is normal.


(2) $\Rightarrow$ (3): Trivial.

(3) $\Rightarrow$ (1): Suppose that (1) is false, so that there is a projection $0 \ne p \in \A$ such that $p\A$ contains no abelian projections.   We need to show that there is a normal state on $p\A$ extending to a singular state on $p\N p$, and it suffices to assume $p=1$.  There are two cases.

Case 1:  Suppose $\A$ contains no finite projections.  Let $\I$ be the closed ideal of $\N$ generated by the finite projections.  The dual space of $\N/\I$ is positively isometric to $\I^\perp$, which by weak* density of $\I$ in $\N$ consists entirely of singular linear functionals.  Since $\A$ contains no finite projections, $\A$ is isometrically imbedded in $\N/\I$, hence its dual space is the set of restrictions of functionals in $\I^\perp$.  We conclude that every normal state of $\A$ is the restriction of a singular state of $\N$.

Case 2:  If $\A$ contains a non-zero finite projection $q$, then $q\A$ is a MASA of $q\N q$.  We may assume that $q=1$, so that $\N$ is finite.  Now $\N$ cannot have a type I summand, because again by \cite[Exercise 6.9.23]{KR} $\A$ would have nonzero abelian projections, contrary to assumption.  So $\N$ is type $\text{II}_1$, and the conclusion follows from Lemma \ref{T:popa}.

(1) $\Leftrightarrow$ (5): This follows from Theorems \ref{T:typeI} and \ref{T:sg}.


(4) $\Leftrightarrow$ (5): The nontrivial direction is covered by Corollary \ref{T:normal}.
\end{proof}

The argument for (3) $\Rightarrow$ (1) allows for a more refined conclusion.  Let $z \in \A$ be the supremum of all projections in $\A$ that are abelian in $\N$, and let $\psi$ be a state on $\A$ with support $p$.  Then $p \leq z$ if and only if $\psi$ has a unique state extension to $\N$, necessarily normal;  $p \leq 1-z$ if and only if $\psi$ has a singular state extension.

\begin{corollary} \label{T:newks}
Let $\A$ be a singly-generated MASA in the semifinite von Neumann algebra $\N$.  If $\A$ is not generated by abelian projections (in particular, if $\N$ is not type I), then some pure states of $\A$ have nonunique state extensions to $\N$.
\end{corollary}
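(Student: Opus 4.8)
The plan is to read this off from Theorem \ref{T:main} together with the contrapositive of the observation recorded in Remark \ref{T:stronger}, since the hypotheses of the corollary (``$\A$ singly-generated, $\N$ semifinite'') are exactly those of Theorem \ref{T:main}. First I would match the hypothesis ``$\A$ is not generated by abelian projections'' with the failure of condition (1) of Theorem \ref{T:main}, i.e.\ the nonexistence of a family of abelian projections $\{p_t\} \subset \A$ with $\sum p_t = 1$. Because condition (1) explicitly forces $\N$ to be type I, any $\N$ that is not type I automatically violates (1); this is what the parenthetical remark in the statement amounts to.

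Granting this identification, the equivalence of conditions (1) and (5) in Theorem \ref{T:main} shows that the failure of (1) entails the failure of (5): there is more than one CE from $\N$ onto $\A$. It then remains only to pass from nonuniqueness of the CE to nonuniqueness of a pure-state extension, and here I would invoke Remark \ref{T:stronger}. Item (1) of Theorem \ref{T:paving}---that every pure state of $\A$ has a unique state extension to $\N$---makes no reference to any particular CE, and through the equivalences of that theorem it forces every CE to be given by the norm limit $x \mapsto \lim_{F \in W} x_F$, hence to be unique. Taking the contrapositive, nonuniqueness of the CE implies that some pure state of $\A$ fails to have a unique state extension, which is precisely the desired conclusion.

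The argument is essentially immediate once the earlier results are in hand, so I do not expect a genuine obstacle; the only point requiring care is the bookkeeping in the first paragraph, namely confirming that ``generated by abelian projections'' coincides with condition (1) of Theorem \ref{T:main} and that (1) implies type I, so that the hypothesis of the corollary really does negate (1). One could alternatively try to extract the result from the refined statement preceding the corollary, using that the central projection $z$ is strictly below $1$ exactly when (1) fails; but that statement is phrased for states with a support projection rather than for the characters of $\A$, which need not have normal supports when $\A$ is continuous, so the route through Remark \ref{T:stronger} is both shorter and better suited to pure states.
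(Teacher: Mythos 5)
Your proposal is correct and follows essentially the same route as the paper, whose proof is exactly this two-step argument: the failure of condition (1) in Theorem \ref{T:main} yields multiple CEs via the equivalence (1) $\Leftrightarrow$ (5), and Remark \ref{T:stronger} (the contrapositive of unique extensions forcing a unique CE) then produces a pure state of $\A$ with nonunique extensions. Your extra bookkeeping---checking that ``not generated by abelian projections'' negates (1) and that (1) forces $\N$ to be type I---is accurate and only makes explicit what the paper leaves implicit.
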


\begin{proof}


Under the hypotheses, Theorem \ref{T:main} implies that there are multiple CEs from $\N$ to $\A$.  By Remark \ref{T:stronger} there must be some pure state on $\A$ with multiple state extensions.
\end{proof}

\end{document}